\providecommand{\U}[1]{\protect\rule{.1in}{.1in}}
\newtheorem{theorem}{Theorem}
\newtheorem{acknowledgement}[theorem]{Acknowledgement}
\newtheorem{example}[theorem]{Example}
\newtheorem{lemma}[theorem]{Lemma}
\newtheorem{remark}[theorem]{Remark}
\newenvironment{proof}[1][Proof]{\noindent\textbf{#1.} }{\ \rule{0.5em}{0.5em}}
\journal{Journal of Computational and Applied Mathematics}
\begin{document}

\begin{frontmatter}

\title{On Taylor series and Kapteyn series of the first and second type}
\author{Diego Ernesto Dominici \fnref{PA}}

\address{Technische Universit\"at Berlin, Stra\ss e des 17.~Juni 136,\\D-10623 Berlin, Germany}

\fntext[PA]{Permanent address: Department of Mathematics State
University of New York at New Paltz, 1~Hawk Dr., New Paltz, NY 12561-2443,
USA}

\begin{abstract}
We study the relation between the coefficients of Taylor series and Kapteyn
series representing the same function. We compute explicit formulas for
expressing one in terms of the other and give examples to illustrate our method.

\end{abstract}

\begin{keyword}
Kapteyn series \sep Taylor series \sep Bessel functions
\MSC 42C10 \sep 30B10 \sep 33C10
\end{keyword}

\end{frontmatter}

\section{Introduction}

Series of the form \cite{MR1349110}
\begin{equation}%
{\displaystyle\sum\limits_{n=0}^{\infty}}
\alpha_{n}^{\nu}\mathrm{J}_{n+\nu}\left[  \left(  n+\nu\right)  z\right]  ,
\label{Kfirst}%
\end{equation}
and%
\begin{equation}%
{\displaystyle\sum\limits_{n=0}^{\infty}}
\alpha_{n}^{\mu,\nu}\mathrm{J}_{\mu+n}\left[  \left(  \mu+\nu+2n\right)
z\right]  \mathrm{J}_{\nu+n}\left[  \left(  \mu+\nu+2n\right)  z\right]  ,
\label{Ksecond}%
\end{equation}
where $\mu,\nu\in\mathbb{C}$ and \textrm{$J$}$_{n}\left(  \cdot\right)  $ is
the Bessel function of the first kind, are called \textit{Kapteyn series of
the first kind }and \textit{Kapteyn series of the second kind }respectively.

Kapteyn series have a long history, going back to Joseph Louis de Lagrange's
1771 paper \emph{Sur le Probl\`{e}me de K\'{e}pler }\cite{Lagrange1771}, where
he solved Kepler's equation \cite{MR1268639}%
\begin{equation}
M=E-\varepsilon\sin\left(  E\right)  , \label{KE}%
\end{equation}
using his method for solving implicit equations \cite{Lagrange1768} (now
called \textit{Lagrange inversion theorem}) and obtained \cite{MR1346357}
\[
E(M)=M+%
{\displaystyle\sum\limits_{n=1}^{\infty}}
\frac{\varepsilon^{n}}{n!}\frac{d^{n-1}}{dM^{n-1}}\sin^{n}\left(  M\right)  .
\]
Here $M$ is the mean anomaly (a parameterization of time) and $E$ is the
eccentric anomaly (an angular parameter) of a body orbiting on an ellipse with
eccentricity $\varepsilon$.

In 1819 Friedrich Wilhelm Bessel published his paper \emph{Analytische
Aufl\"{o}sung der Kepler'schen Aufgabe} \cite{Bessel}, where he approached
(\ref{KE}) using a different method. First of all he observed that the
function $g(M)=E(M)-M$ defined implicitly by $g=\varepsilon\sin\left(
g+M\right)  $ is $2\pi-$periodic and satisfies $g(0)=0=g(\pi).$ Hence, $g(M)$
can be expanded in a Fourier sine series
\[
g(M)=%
{\displaystyle\sum\limits_{n=1}^{\infty}}
b_{n}\sin\left(  nM\right)  ,
\]
where
\begin{align*}
b_{n}  &  =\frac{2}{\pi}%
{\displaystyle\int\limits_{0}^{\pi}}
g(M)\sin(nM)dM=\\
&  =-\frac{2}{\pi}\left[  g(M)\frac{\cos(nM)}{n}\right]  _{0}^{\pi}+\frac
{2}{\pi n}%
{\displaystyle\int\limits_{0}^{\pi}}
\cos(nM)dg\\
&  =\frac{2}{\pi n}%
{\displaystyle\int\limits_{0}^{\pi}}
\cos(nM)d\left(  E-M\right) \\
&  =\frac{2}{\pi n}%
{\displaystyle\int\limits_{0}^{\pi}}
\cos\left[  n\left(  E-\varepsilon\sin E\right)  \right]  dE-\frac{2}{\pi n}%
{\displaystyle\int\limits_{0}^{\pi}}
\cos(nM)dM
\end{align*}
and hence%
\[
b_{n}=\frac{2}{\pi n}%
{\displaystyle\int\limits_{0}^{\pi}}
\cos\left(  nE-n\varepsilon\sin E\right)  dE.
\]
He then introduced the function \textrm{$J$}$_{n}(z)$ defined by%
\begin{equation}
\text{\ }\mathrm{J}_{n}(z)=\frac{1}{\pi}%
{\displaystyle\int\limits_{0}^{\pi}}
\cos\left(  nE-z\sin E\right)  dE,\quad n\in\mathbb{Z} \label{Jint}%
\end{equation}
which now bears his name and obtained%
\begin{equation}
E(M)=M+%
{\displaystyle\sum\limits_{n=1}^{\infty}}
\frac{2}{n}\mathrm{J}_{n}(n\varepsilon)\sin\left(  nM\right)  . \label{E(M)}%
\end{equation}
Bessel's work on (\ref{Jint}) was continued by other researchers including
Lommel, who defined the Bessel function of the first kind by \cite{Lommel}%
\begin{equation}
\mathrm{J}_{\nu}\left(  z\right)  =%
{\displaystyle\sum\limits_{n=0}^{\infty}}
\frac{\left(  -1\right)  ^{n}}{n!\Gamma\left(  \nu+n+1\right)  }\left(
\frac{z}{2}\right)  ^{\nu+2n},\quad\nu\in\mathbb{C}, \label{bessel}%
\end{equation}
where $\Gamma(\cdot)$ is the Gamma function.

In 1817, Francesco Carlini \cite{Carlini} found an expression for the true
anomaly $v$ (an angular parameter), defined in terms of $E$ and $\varepsilon$
by%
\[
\tan\left(  \frac{v}{2}\right)  =\sqrt{\frac{1+\varepsilon}{1-\varepsilon}%
}\tan\left(  \frac{E}{2}\right)  .
\]
Carlini's expression reads \cite{MR1140277}
\[
v=M+%
{\displaystyle\sum\limits_{n=1}^{\infty}}
B_{n}\sin\left(  nM\right)  ,
\]
where%
\[
B_{n}=\frac{2}{n}\mathrm{J}_{n}(n\varepsilon)+%
{\displaystyle\sum\limits_{m=0}^{\infty}}
\alpha^{m}\left[  \mathrm{J}_{n-m}(n\varepsilon)+\mathrm{J}_{n+m}%
(n\varepsilon)\right]  ,
\]
with $\varepsilon=\frac{2\alpha}{1+\alpha^{2}}.$ The problem considered by
Carlini was to determine the asymptotic behavior of the coefficients $B_{n}$
for large values of $n$ \cite{MR1915514}. The astronomer Johann Franz Encke
drew Carl Gustav Jacob Jacobi's attention to the work of Carlini. In 1849,
Jacobi published a paper improving and correcting Carlini's article
\cite{Jacobi1849} and in 1850 Jacobi published a translation from Italian into
German \cite{Jacobi1850}, with critical comments and extensions of Carlini's investigation.

Bessel's research on series of the type (\ref{E(M)}) was continued by Ernst
Meissel \cite{MR1344309} in his papers \cite{Meissel1}, \cite{Meissel2} and by
Willem Kapteyn (not to be confused with his brother Jacobus Cornelius Kapteyn
\cite{Kapteyn}) in the articles \cite{MR1508887} and \cite{Kapteyn1}. Most of
the early work on Kapteyn series, together with their own results, can be
found in the books by Niels Nielsen \cite[Chapter XXII]{Nielsen} and George
Neville Watson \cite[Chapter 17]{MR1349110}.

In recent years, there has been a renewed interest on Kapteyn series,
particularly from researchers in the fields of Astrophysics and
Electrodynamics (see \cite{Tautz2} for a review of current applications). In
\cite{Tautz1}, Ian Lerche and Robert C. Tautz studied the Kapteyn series of
the second kind%
\[
S_{1}(a)=%
{\displaystyle\sum\limits_{n=1}^{\infty}}
n^{4}\mathrm{J}_{n}^{2}\left(  na\right)
\]
and derived the formula%
\[
S_{1}(a)=\frac{a^{2}\left(  64+592a^{2}+472a^{4}+27a^{6}\right)  }{256\left(
1-a^{2}\right)  ^{\frac{13}{2}}}.
\]
They continued their investigations in \cite{MR2449307}, where they outlined a
way for calculating more general Kapteyn series of the form%
\begin{equation}
S_{1}(m,a)=%
{\displaystyle\sum\limits_{n=1}^{\infty}}
n^{2m}\mathrm{J}_{n}^{2}\left(  na\right)  ,\quad m=0,1,\ldots. \label{S1}%
\end{equation}

The purpose of this paper is to describe a method for computing the
coefficients in the Taylor series of functions defined by Kapteyn series of
the first (\ref{Kfirst}) and second (\ref{Ksecond}) kind. As an example, we
will show a closed-form expression of (\ref{S1}) valid for all values of $m.$

\section{Kapteyn series of the first kind}

We begin by considering functions expressed as Kapteyn series of the first kind.

\begin{theorem}
Suppose that%
\begin{equation}
f\left(  z\right)  =%
{\displaystyle\sum\limits_{m=0}^{\infty}}
b_{m}z^{m} \label{f1}%
\end{equation}
and%
\begin{equation}
z^{\nu}f\left(  z\right)  =%
{\displaystyle\sum\limits_{n=0}^{\infty}}
a_{n}^{\nu}\mathrm{J}_{\nu+n}\left[  \left(  \nu+n\right)  z\right]  ,
\label{f2}%
\end{equation}
where both series converge absolutely for $z$ in some domain $\Omega.$ Then,
we have%
\begin{equation}
a_{s}^{\nu}=%
{\displaystyle\sum\limits_{m=0}^{\left\lfloor \frac{s}{2}\right\rfloor }}
v_{s,m}b_{s-2m} \label{an1}%
\end{equation}
and%
\begin{equation}
b_{s}=%
{\displaystyle\sum\limits_{m=0}^{\left\lfloor \frac{s}{2}\right\rfloor }}
u_{s,m}a_{s-2m}^{\nu} \label{bn1}%
\end{equation}
for all values of $\nu,$ with%
\[
u_{n,k}=\frac{\left(  -1\right)  ^{k}}{k!\Gamma\left(  \nu+n-k+1\right)
}\left(  \frac{\nu+n-2k}{2}\right)  ^{\nu+n}%
\]
and%
\[
v_{n,k}=\frac{1}{2}\frac{\left(  \nu+n-2k\right)  ^{2}\Gamma\left(
\nu+n-k\right)  }{k!}\left(  \frac{2}{\nu+n}\right)  ^{\nu+n-2k+1}.
\]

\end{theorem}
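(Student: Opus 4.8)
The plan is to prove the two formulas separately: \eqref{bn1} comes from substituting Lommel's series \eqref{bessel} into \eqref{f2} and comparing powers of $z$, and \eqref{an1} then follows by inverting the resulting (within each parity class) triangular linear system, the only substantial point being a polynomial identity for the coefficients. For \eqref{bn1}: in \eqref{f2} replace each $\mathrm{J}_{\nu+n}[(\nu+n)z]$ by its Lommel expansion with argument $(\nu+n)z$, i.e.\ $\mathrm{J}_{\nu+n}[(\nu+n)z]=\sum_{k\ge0}\frac{(-1)^{k}}{k!\,\Gamma(\nu+n+k+1)}\left(\frac{\nu+n}{2}\right)^{\nu+n+2k}z^{\nu+n+2k}$. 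Absolute convergence of both series on $\Omega$ justifies rearranging the resulting double series; the coefficient of $z^{\nu+s}$ collects exactly the terms with $n+2k=s$, and equating it with the coefficient of $z^{\nu+s}$ in $z^{\nu}f(z)=\sum_{s}b_{s}z^{\nu+s}$ gives $b_{s}=\sum_{k=0}^{\lfloor s/2\rfloor}u_{s,k}a_{s-2k}^{\nu}$ with $u_{s,k}$ as stated.

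For \eqref{an1}: restricted to each parity class of its index, \eqref{bn1} exhibits $(b_{s})$ as the image of $(a_{s}^{\nu})$ under a lower triangular matrix $U$ with nonzero diagonal $u_{s,0}$; hence $U$ is invertible, and, since $U$ and the matrix $V$ underlying \eqref{an1} are both lower triangular, \eqref{an1} is equivalent to $UV=I$, that is,
\[
\sum_{k=0}^{q}u_{s,k}\,v_{s-2k,\,q-k}=\delta_{q,0}\qquad(q\ge 0).
\]
A short computation, using $\Gamma(\lambda-k-q)/\Gamma(\lambda-k+1)=1/\prod_{i=0}^{q}(\lambda-k-i)$ and cancelling the powers of $\tfrac{\lambda-2k}{2}$, rewrites the left-hand side as $\dfrac{(\lambda-2q)^{2}}{2^{2q}\,q!}\,S_{q}(\lambda)$, where $\lambda=\nu+s$ and
\[
S_{q}(\lambda)=\sum_{m=0}^{q}(-1)^{m}\binom{q}{m}\frac{(\lambda-2m)^{2q-1}}{(\lambda-m)(\lambda-m-1)\cdots(\lambda-m-q)}.
\]
Since $S_{0}(\lambda)=\lambda^{-2}$ disposes of $q=0$, it remains to prove $S_{q}\equiv0$ for every $q\ge 1$; being a polynomial identity in $\lambda$, this then yields \eqref{an1} for all $\nu$.

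The identity $S_{q}\equiv0$ is the main obstacle, and I would attack it by partial fractions. From $\prod_{i=0}^{q}(\lambda-m-i)^{-1}=\frac{1}{q!}\sum_{i=0}^{q}(-1)^{i}\binom{q}{i}(\lambda-m-i)^{-1}$,
\[
S_{q}(\lambda)=\frac{1}{q!}\sum_{i=0}^{q}(-1)^{i}\binom{q}{i}\sum_{m=0}^{q}(-1)^{m}\binom{q}{m}\frac{(\lambda-2m)^{2q-1}}{\lambda-m-i}.
\]
For fixed $i$, polynomial division in $m$ gives $\frac{(\lambda-2m)^{2q-1}}{\lambda-m-i}=Q_{i}(m)+\frac{(2i-\lambda)^{2q-1}}{\lambda-m-i}$; writing $A=\lambda-2m$, $B=\lambda-2i$ we have $\lambda-m-i=\tfrac12(A+B)$, and because the exponent $2q-1$ is \emph{odd} the factorization $A^{2q-1}+B^{2q-1}=(A+B)\sum_{r=0}^{2q-2}(-1)^{r}A^{2q-2-r}B^{r}$ yields $Q_{i}(m)=2\sum_{r=0}^{2q-2}(-1)^{r}(\lambda-2m)^{2q-2-r}(\lambda-2i)^{r}$. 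Now use the elementary fact that $\sum_{m=0}^{q}(-1)^{m}\binom{q}{m}P(m)=0$ for every polynomial $P$ with $\deg P<q$: summing the $Q_{i}$-part over $m$ and $i$ produces $2\sum_{r=0}^{2q-2}(-1)^{r}\big[\sum_{m}(-1)^{m}\binom{q}{m}(\lambda-2m)^{2q-2-r}\big]\big[\sum_{i}(-1)^{i}\binom{q}{i}(\lambda-2i)^{r}\big]$, in which every summand vanishes because the two bracketed sums cannot both survive (the first needs $r\le q-2$, the second $r\ge q$); while the remaining terms give $\frac{1}{q!}\sum_{i}(-1)^{i}\binom{q}{i}(2i-\lambda)^{2q-1}\cdot\frac{q!}{\prod_{m=0}^{q}(\lambda-i-m)}=-S_{q}(\lambda)$, by the same partial-fraction identity applied now to the sum over $m$. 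Hence $S_{q}(\lambda)=0-S_{q}(\lambda)$, so $2S_{q}\equiv0$ and $S_{q}\equiv0$. Everything else is bookkeeping; the one genuinely delicate point is this identity, and the structural reason it works is precisely that the exponent $2q-1$ is odd, which is what makes the factorization of $A^{2q-1}+B^{2q-1}$ available.
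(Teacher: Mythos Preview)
Your derivation of \eqref{bn1} is exactly the paper's: substitute Lommel's series \eqref{bessel} into \eqref{f2}, reindex by $s=n+2k$, and read off coefficients.

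For \eqref{an1} you take a genuinely different route. The paper proves the \emph{other} triangular identity $VU=I$, namely $\sum_{j=k}^{s}u_{2j,j-k}\,v_{2s,s-j}=\delta_{k,s}$, by simply displaying a closed form for the partial sum $\sum_{j=k}^{p}u_{2j,j-k}\,v_{2s,s-j}$ that carries a factor $1/\Gamma(s-p)$ (so that setting $p=s$ forces the sum to vanish when $k\neq s$); this is a Gosper--type certificate that the paper states but does not derive. You instead prove $UV=I$ and reduce it to the rational identity $S_{q}(\lambda)\equiv0$, which you then establish by partial fractions together with the factorisation of $A^{2q-1}+B^{2q-1}$; the oddness of the exponent is precisely what makes your argument close, and this gives a structural explanation that the paper's certificate does not. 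Your proof is longer to write out but entirely self-contained, whereas the paper's is shorter to state but leaves the verification of the partial-sum formula to the reader.

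One small slip: the partial-fraction identity should read $\prod_{i=0}^{q}(\lambda-m-i)^{-1}=\frac{1}{q!}\sum_{i=0}^{q}(-1)^{q-i}\binom{q}{i}(\lambda-m-i)^{-1}$, i.e.\ with $(-1)^{q-i}$ rather than $(-1)^{i}$. You use the formula twice (once forward to expand, once backward to collapse the sum over $m$), so the stray factor $(-1)^{q}$ appears twice and cancels; hence your conclusion $S_{q}=-S_{q}$ is unaffected. Also, ``polynomial identity in $\lambda$'' should read ``identity of rational functions in $\lambda$'', since $S_{q}$ has poles; the point---that the identity for generic $\lambda$ gives \eqref{an1} for all $\nu$---is of course correct.
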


\begin{proof}
Let $z\in\Omega.$ To prove (\ref{bn1}), we use (\ref{f1}) and (\ref{f2}), to
get%
\[%
{\displaystyle\sum\limits_{m=0}^{\infty}}
b_{m}z^{\nu+m}=z^{\nu}f\left(  z\right)  =%
{\displaystyle\sum\limits_{n=0}^{\infty}}
a_{n}^{\nu}\mathrm{J}_{\nu+n}\left[  \left(  \nu+n\right)  z\right]
\]
and from (\ref{bessel}) we obtain%
\[%
{\displaystyle\sum\limits_{s=0}^{\infty}}
b_{s}z^{s+\nu}=%
{\displaystyle\sum\limits_{n=0}^{\infty}}
a_{n}^{\nu}%
{\displaystyle\sum\limits_{m=0}^{\infty}}
\frac{\left(  -1\right)  ^{m}}{m!\Gamma\left(  \nu+n+m+1\right)  }\left[
\frac{\left(  \nu+n\right)  z}{2}\right]  ^{\nu+n+2m}%
\]
or%
\begin{equation}%
{\displaystyle\sum\limits_{s=0}^{\infty}}
b_{s}z^{s}=%
{\displaystyle\sum\limits_{n=0}^{\infty}}
a_{n}^{\nu}%
{\displaystyle\sum\limits_{m=0}^{\infty}}
\frac{\left(  -1\right)  ^{m}}{m!\Gamma\left(  \nu+n+m+1\right)  }\left(
\frac{\nu+n}{2}\right)  ^{\nu+n+2m}z^{n+2m}. \label{eq1}%
\end{equation}
Setting $n+2m=s$ on the right-hand side of (\ref{eq1}), we have%
\[%
{\displaystyle\sum\limits_{s=0}^{\infty}}
b_{s}z^{s}=%
{\displaystyle\sum\limits_{s=0}^{\infty}}
z^{s}%
{\displaystyle\sum\limits_{m=0}^{\left\lfloor \frac{s}{2}\right\rfloor }}
\frac{\left(  -1\right)  ^{m}a_{s-2m}^{\nu}}{m!\Gamma\left(  \nu+s-m+1\right)
}\left(  \frac{\nu+s-2m}{2}\right)  ^{\nu+s},
\]
from which (\ref{bn1}) follows.

We have%
\begin{gather*}%
{\displaystyle\sum\limits_{j=k}^{p}}
u_{2j,j-k}v_{2s,s-j}=\frac{\left(  -1\right)  ^{p+k}2^{\nu+2p}}{\Gamma\left(
s-p\right)  }\left(  \frac{\nu+2k}{\nu+2s}\right)  ^{\nu+2p+1}\\
\times\frac{\left(  \nu+2k\right)  }{\left(  s-k\right)  \left(
\nu+s+k\right)  }\binom{\nu+p+s}{p-k}%
\end{gather*}
and hence%
\[%
{\displaystyle\sum\limits_{j=k}^{s}}
u_{2j,j-k}v_{2s,s-j}=0,\quad k\neq s.
\]
When $k=s,$ we get%
\begin{gather*}
u_{2s,0}v_{2s,0}=\frac{1}{\Gamma\left(  \nu+2s+1\right)  }\left(  \frac
{\nu+2s}{2}\right)  ^{\nu+2s}\\
\times\left(  \nu+2s\right)  \Gamma\left(  \nu+2s\right)  \left(  \frac{2}%
{\nu+2s}\right)  ^{\nu+2s}=1
\end{gather*}
and therefore%
\[%
{\displaystyle\sum\limits_{j=k}^{s}}
u_{2j,j-k}v_{2s,s-j}=\delta_{k,s}.
\]
Thus,%
\begin{gather*}%
{\displaystyle\sum\limits_{m=0}^{s}}
v_{2s,m}b_{2\left(  s-m\right)  }=%
{\displaystyle\sum\limits_{j=0}^{s}}
v_{2s,s-j}b_{2j}=%
{\displaystyle\sum\limits_{j=0}^{s}}
v_{2s,s-j}%
{\displaystyle\sum\limits_{m=0}^{j}}
u_{2j,m}a_{2\left(  j-m\right)  }^{\nu}\\
=%
{\displaystyle\sum\limits_{j=0}^{s}}
v_{2s,s-j}%
{\displaystyle\sum\limits_{k=0}^{j}}
u_{2j,j-k}a_{2k}^{\nu}=%
{\displaystyle\sum\limits_{k=0}^{s}}
a_{2k}^{\nu}%
{\displaystyle\sum\limits_{j=k}^{s}}
u_{2j,j-k}v_{2s,s-j}=\\%
{\displaystyle\sum\limits_{k=0}^{s}}
a_{2k}^{\nu}\delta_{k,s}=a_{2s}^{\nu}%
\end{gather*}
and a similar computation holds for $b_{2k+1}$ and $a_{2k+1}^{\nu},$ proving
(\ref{an1}).
\end{proof}

\begin{remark}
Formula (\ref{an1}) appeared in \cite[17.5 (6)]{MR1349110}, but it contains a
small mistake because there is a factor of $\frac{1}{2}$ missing in the
denominator. The result was also published in \cite[7.10.2 (29)]{MR698780},
but there is also a misprint there, since the factor $b_{n-2m}$ is missing in
the sum.

In \cite{MR2326079}, we computed formulas for $b_{s}$ and $a_{n}^{\nu}$ when
$\nu=0.$
\end{remark}

In order to find the coefficients $b_{s}$ for a particular choice of $\nu=0$
and $a_{n}^{\nu},$ we need the following result.

\begin{lemma}
Let $r,m\in N.$ Then, we have%
\begin{equation}%
{\displaystyle\sum\limits_{k=0}^{r}}
\binom{r}{k}\left(  -1\right)  ^{k}\left(  r-2k\right)  ^{m}=2^{r}\left.
\frac{d^{m}\sinh^{r}(t)}{dt^{m}}\right\vert _{t=0}. \label{binomial}%
\end{equation}

\end{lemma}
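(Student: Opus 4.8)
The plan is to exploit the exponential representation of the hyperbolic sine. Starting from $2\sinh(t)=e^{t}-e^{-t}$, I would raise both sides to the $r$-th power and expand the right-hand side by the binomial theorem:
\[
2^{r}\sinh^{r}(t)=\left(e^{t}-e^{-t}\right)^{r}=\sum_{k=0}^{r}\binom{r}{k}(-1)^{k}e^{(r-k)t}e^{-kt}=\sum_{k=0}^{r}\binom{r}{k}(-1)^{k}e^{(r-2k)t}.
\]
This reduces $\sinh^{r}(t)$ to a finite linear combination of pure exponentials, which is the crux of the argument.

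Next I would differentiate this identity $m$ times with respect to $t$. Since the sum is finite, term-by-term differentiation is immediate and requires no convergence justification, and $\dfrac{d^{m}}{dt^{m}}e^{(r-2k)t}=(r-2k)^{m}e^{(r-2k)t}$. Hence
\[
2^{r}\frac{d^{m}\sinh^{r}(t)}{dt^{m}}=\sum_{k=0}^{r}\binom{r}{k}(-1)^{k}(r-2k)^{m}e^{(r-2k)t}.
\]

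Finally I would set $t=0$, so that every exponential factor equals $1$, yielding
\[
2^{r}\left.\frac{d^{m}\sinh^{r}(t)}{dt^{m}}\right\vert_{t=0}=\sum_{k=0}^{r}\binom{r}{k}(-1)^{k}(r-2k)^{m},
\]
which is exactly \eqref{binomial}. There is essentially no obstacle here: the only point worth a word is that the interchange of the $m$-fold derivative with the summation is legitimate because the sum has only $r+1$ terms; everything else is the binomial theorem and the elementary derivative of an exponential.
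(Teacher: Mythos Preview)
Your proof is correct and follows essentially the same approach as the paper: expand $\sinh^{r}(t)$ via the binomial theorem applied to $(e^{t}-e^{-t})^{r}$, differentiate the resulting finite sum of exponentials $m$ times, and evaluate at $t=0$. The only cosmetic difference is that you carry the factor $2^{r}$ on the left-hand side from the outset, whereas the paper keeps it as $1/2^{r}$ on the right.
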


\begin{proof}
We have%
\[
\sinh^{r}(t)=\left(  \frac{e^{t}-e^{-t}}{2}\right)  ^{r}=\frac{1}{2^{r}}%
{\displaystyle\sum\limits_{k=0}^{r}}
\binom{r}{k}\left(  -1\right)  ^{k}e^{\left(  r-2k\right)  t}.
\]
Since%
\[
\frac{d^{m}e^{at}}{dt^{m}}=a^{m}e^{at},
\]
we obtain,%
\[
\left.  \frac{d^{m}\sinh^{r}(t)}{dt^{m}}\right\vert _{t=0}=\frac{1}{2^{r}}%
{\displaystyle\sum\limits_{k=0}^{r}}
\binom{r}{k}\left(  -1\right)  ^{k}\left(  r-2k\right)  ^{m}.
\]

\end{proof}

\begin{example}
Let's consider the special case $\nu=0$ and $a_{n}^{\nu}=n^{2p}.$ Then,
(\ref{bn1}) gives%
\begin{equation}
b_{s}(p)=\frac{1}{s!2^{s}}%
{\displaystyle\sum\limits_{k=0}^{\left\lfloor \frac{s}{2}\right\rfloor }}
\left(  -1\right)  ^{k}\binom{s}{k}\left(  s-2k\right)  ^{s+2p}. \label{bn2}%
\end{equation}
Since the terms in the sum are symmetric with respect to $k=\left\lfloor
\frac{s}{2}\right\rfloor ,$ we can write%
\[
b_{s}(p)=\frac{1}{s!2^{s}}\frac{1}{2}%
{\displaystyle\sum\limits_{k=0}^{s}}
\left(  -1\right)  ^{k}\binom{s}{k}\left(  s-2k\right)  ^{s+2p},
\]
unless $s=0=p,$ in which case we have%
\[
b_{0}(0)=1.
\]
Thus, we have%
\[
b_{s}(p)=\varepsilon_{s,p}\frac{1}{s!2^{s}}%
{\displaystyle\sum\limits_{k=0}^{s}}
\left(  -1\right)  ^{k}\binom{s}{k}\left(  s-2k\right)  ^{s+2p},
\]
with%
\begin{equation}
\varepsilon_{s,p}=\left\{
\begin{array}
[c]{c}%
1,\quad s=0=p\\
\frac{1}{2},\quad\text{otherwise}%
\end{array}
\right.  . \label{epsilon}%
\end{equation}
From (\ref{binomial}), we obtain%
\begin{align*}
b_{s}(p)  &  =\frac{\varepsilon_{s,p}}{s!}\left.  \frac{d^{s+2p}\sinh^{s}%
(t)}{dt^{s+2p}}\right\vert _{t=0}\\
&  =\frac{\varepsilon_{s,p}}{s!}\left(  s+2p\right)  !\left[  t^{s+2p}\right]
\sinh^{s}(t)
\end{align*}
or
\begin{equation}
b_{s}(p)=\varepsilon_{s,p}\left(  s+1\right)  _{2p}\left[  t^{2p}\right]
\left[  \frac{\sinh(t)}{t}\right]  ^{s}, \label{bs01}%
\end{equation}
where $\left[  t^{r}\right]  G(t)$ denotes the coefficient of $t^{r}$ in the
McLaurin series of $G(t).$ From (\ref{bs01}) we get
\begin{align*}
b_{s}(0)  &  =\varepsilon_{s,0},\quad b_{s}(1)=\frac{1}{2}\left(  s+1\right)
_{2}\frac{s}{6},\quad\\
b_{s}(2)  &  =\frac{1}{2}\left(  s+1\right)  _{4}\frac{s\left(  5s-2\right)
}{360},\quad\\
b_{s}(3)  &  =\frac{1}{2}\left(  s+1\right)  _{6}\frac{s\left(  35s^{2}%
-42s+16\right)  }{45360},\\
b_{s}(4)  &  =\frac{1}{2}\left(  s+1\right)  _{8}\frac{s\left(  5s-4\right)
\left(  35s^{2}-56s+36\right)  }{5443200},\ldots.
\end{align*}

If we define%
\[%
{\displaystyle\sum\limits_{n=0}^{\infty}}
n^{2p}\mathrm{J}_{n}\left(  nz\right)  =f_{p}\left(  z\right)  =%
{\displaystyle\sum\limits_{s=0}^{\infty}}
b_{s}(p)z^{s},
\]
it follows from (\ref{bs01}) that%
\[
f_{p}\left(  z\right)  =\left[  t^{2p}\right]
{\displaystyle\sum\limits_{s=0}^{\infty}}
\varepsilon_{s,p}\left(  s+1\right)  _{2p}\left[  \frac{\sinh(t)}{t}z\right]
^{s}%
\]%
\begin{equation}
f_{p}\left(  z\right)  =\left[  t^{2p}\right]  \left\{  \frac{1}{2}%
+\frac{\left(  2p\right)  !}{2}\left[  1-\frac{\sinh(t)}{t}z\right]
^{-(2p+1)}\right\}  . \label{fp}%
\end{equation}
Using (\ref{fp}), we get%
\begin{align*}
f_{0}\left(  z\right)   &  =\frac{2-z}{2\left(  1-z\right)  },\quad
f_{1}\left(  z\right)  =\frac{z}{2\left(  1-z\right)  ^{4}},\\
f_{2}\left(  z\right)   &  =\frac{z\left(  9z+1\right)  }{2\left(  1-z\right)
^{7}},\quad f_{3}\left(  z\right)  =\frac{z\left(  255z^{2}+54z+1\right)
}{2\left(  1-z\right)  ^{10}},\\
f_{4}\left(  z\right)   &  =\frac{z\left(  11025z^{3}+4131z^{2}+243z+1\right)
}{2\left(  1-z\right)  ^{13}},\ldots
\end{align*}
and in general%
\[
f_{n}\left(  z\right)  =\frac{z}{2}\frac{P_{n-1}(z)}{\left(  1-z\right)
^{3n+1}},
\]
where $P_{0}(z)=1$ and $P_{n}(z)$ is a polynomial of degree $n$ for
$n=1,2,\ldots.$

Using the relation \cite[17.33]{MR1349110}%
\[
f_{n+1}\left(  z\right)  =\frac{1}{1-z^{2}}\left(  z\frac{d}{dz}\right)
^{2}f_{n}\left(  z\right)  ,
\]
we find that the polynomials $P_{n}(z)$ satisfy
\begin{align*}
P_{n+1}(z)  &  =\frac{z^{2}\left(  z-1\right)  ^{2}}{z+1}P_{n}^{\prime\prime
}(z)+\frac{z}{z+1}\left[  \left(  1-6n\right)  z^{2}+(6n-4)z+3\right]
P_{n}^{\prime}(z)\\
&  +\frac{1}{z+1}\left[  9n^{2}z^{2}+(9n+1)z+1\right]  P_{n}(z).
\end{align*}

\end{example}

\section{Kapteyn series of the second kind}

We now consider functions expressed as Kapteyn series of the second kind.

\begin{theorem}
Suppose that%
\begin{equation}
f\left(  z\right)  =%
{\displaystyle\sum\limits_{m=0}^{\infty}}
b_{m}z^{m} \label{fb}%
\end{equation}
and%
\begin{equation}
z^{\mu+\nu}f\left(  z\right)  =%
{\displaystyle\sum\limits_{n=0}^{\infty}}
\left(  a_{n}^{\mu,\nu}+zc_{n}^{\mu,\nu}\right)  \mathrm{J}_{\mu+n}\left[
\left(  \mu+\nu+2n\right)  z\right]  \mathrm{J}_{\nu+n}\left[  \left(  \mu
+\nu+2n\right)  z\right]  , \label{kap2}%
\end{equation}
where both series converge absolutely for $z$ in some domain $\Omega.$ Then,
we have%
\begin{equation}
a_{s}^{\mu,\nu}=%
{\displaystyle\sum\limits_{k=0}^{s}}
\alpha_{s,k}^{\mu,\nu}b_{2k},\quad c_{s}^{\mu,\nu}=%
{\displaystyle\sum\limits_{k=0}^{s}}
\alpha_{s,k}^{\mu,\nu}b_{2k+1} \label{as}%
\end{equation}
and
\begin{equation}
b_{2s}=%
{\displaystyle\sum\limits_{k=0}^{s}}
\beta_{s,k}^{\mu,\nu}a_{k}^{\mu,\nu},\quad b_{2s+1}=%
{\displaystyle\sum\limits_{k=0}^{s}}
\beta_{s,k}^{\mu,\nu}c_{k}^{\mu,\nu}, \label{b2s}%
\end{equation}
for all $\mu,\nu,$ with%
\begin{align*}
\alpha_{s,k}^{\mu,\nu}  &  =\frac{\left(  \mu+\nu+2k\right)  ^{2}\Gamma\left(
\mu+k+1\right)  \Gamma\left(  \nu+k+1\right)  }{\left(  \mu+\nu+s+k\right)
\left(  \mu+\nu+2s\right)  }\\
&  \times\binom{\mu+\nu+s+k}{s-k}\left(  \frac{2}{\mu+\nu+2s}\right)
^{\mu+\nu+2k}%
\end{align*}
and%
\[
\beta_{s,k}^{\mu,\nu}=\frac{\left(  -1\right)  ^{s+k}}{\Gamma\left(
\mu+s+1\right)  \Gamma\left(  \nu+s+1\right)  }\binom{\mu+\nu+2s}{s-k}\left(
\frac{\mu+\nu+2k}{2}\right)  ^{\mu+\nu+2s}.
\]

\end{theorem}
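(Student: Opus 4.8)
The plan is to follow the scheme of the first theorem, replacing the single–series expansion (\ref{bessel}) with the classical product formula for Bessel functions \cite{MR1349110}
\[
\mathrm{J}_{\mu}(w)\,\mathrm{J}_{\nu}(w)=\sum_{k=0}^{\infty}\frac{\left(-1\right)^{k}\Gamma\left(\mu+\nu+2k+1\right)}{k!\,\Gamma\left(\mu+k+1\right)\Gamma\left(\nu+k+1\right)\Gamma\left(\mu+\nu+k+1\right)}\left(\frac{w}{2}\right)^{\mu+\nu+2k}
\]
(which itself follows from (\ref{bessel}) by multiplying the two series and evaluating the inner sum with the Chu--Vandermonde identity). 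First I would substitute $w=\left(\mu+\nu+2n\right)z$, replace $\mu,\nu$ by $\mu+n,\nu+n$, and insert the result into the right-hand side of (\ref{kap2}). Since each product $\mathrm{J}_{\mu+n}\left[\left(\mu+\nu+2n\right)z\right]\mathrm{J}_{\nu+n}\left[\left(\mu+\nu+2n\right)z\right]$ is an \emph{even} power series in $z$ starting at $z^{\mu+\nu+2n}$, the coefficients $a_{n}^{\mu,\nu}$ contribute only to the powers $z^{\mu+\nu+2s}$ while, because of the extra factor $z$, the coefficients $c_{n}^{\mu,\nu}$ contribute only to the powers $z^{\mu+\nu+2s+1}$. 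Collecting, for each $s$, the full coefficient of $z^{\mu+\nu+2s}$ on the right (only the terms with Bessel index $n\le s$ contribute), equating it with the coefficient of the same power in $z^{\mu+\nu}f\left(z\right)$ read off from (\ref{fb}), and rewriting the resulting ratio of Gamma functions as a binomial coefficient, reproduces exactly $b_{2s}=\sum_{k=0}^{s}\beta_{s,k}^{\mu,\nu}a_{k}^{\mu,\nu}$; the identical computation at the odd powers gives $b_{2s+1}=\sum_{k=0}^{s}\beta_{s,k}^{\mu,\nu}c_{k}^{\mu,\nu}$. Absolute convergence of both series is what licenses the rearrangement of the double series throughout. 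This disposes of (\ref{b2s}).

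To obtain (\ref{as}) I would invert, just as (\ref{an1}) was deduced from (\ref{bn1}). It suffices to prove that the lower-triangular matrices $(\alpha_{s,k}^{\mu,\nu})$ and $(\beta_{s,k}^{\mu,\nu})$ are mutually inverse, i.e.\ that
\[
\sum_{k=j}^{s}\alpha_{s,k}^{\mu,\nu}\,\beta_{k,j}^{\mu,\nu}=\delta_{s,j},
\]
because then, substituting $b_{2k}=\sum_{j}\beta_{k,j}^{\mu,\nu}a_{j}^{\mu,\nu}$ (resp.\ $b_{2k+1}=\sum_{j}\beta_{k,j}^{\mu,\nu}c_{j}^{\mu,\nu}$) into the right-hand side of (\ref{as}) and interchanging the two finite sums collapses everything to $a_{s}^{\mu,\nu}$ (resp.\ $c_{s}^{\mu,\nu}$), exactly as in the proof of the first theorem. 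In the product $\alpha_{s,k}^{\mu,\nu}\beta_{k,j}^{\mu,\nu}$ the factors $\Gamma\left(\mu+k+1\right)\Gamma\left(\nu+k+1\right)$ cancel and the powers of $2$ combine into $\left(\frac{\mu+\nu+2j}{\mu+\nu+2s}\right)^{\mu+\nu+2k}$; writing this as $\left(\frac{\mu+\nu+2j}{\mu+\nu+2s}\right)^{\mu+\nu+2j}\left(\frac{\mu+\nu+2j}{\mu+\nu+2s}\right)^{2\left(k-j\right)}$ pulls the single transcendental factor out of the sum over $k$ and reduces the claim to a rational identity in $\mu+\nu$. The diagonal case $s=j$ is immediate: only $k=j$ survives and the term equals $1$. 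For $s>j$ I would imitate the device used in the first theorem, namely evaluate the truncated sum $\sum_{k=j}^{p}\alpha_{s,k}^{\mu,\nu}\beta_{k,j}^{\mu,\nu}$ in closed form and display in it a factor $1/\Gamma\left(s-p\right)$, so that its value at $p=s$ is forced to vanish. Once the inversion identity is in hand, (\ref{as}) follows; as before, the formulas are to be read as identities between meromorphic functions of $\mu,\nu$, with the general case following by continuity.

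The coefficient bookkeeping that produces (\ref{b2s}) is routine, as is the cancellation in $\alpha_{s,k}^{\mu,\nu}\beta_{k,j}^{\mu,\nu}$. I expect the genuine obstacle to be the inversion identity in the range $s>j$: after factoring out the transcendental part, the summand is a terminating hypergeometric term, so either one guesses its antidifference (equivalently, runs Gosper's algorithm) and verifies the resulting closed form for the truncated sum by induction on $p$, or one recognizes the definite sum as a special case of a known summation theorem. This is precisely the step that is passed over with ``We have\dots and hence\dots'' in the proof of the first theorem, and it is where the computational weight of the argument lies.
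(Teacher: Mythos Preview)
Your proposal is correct and follows essentially the same route as the paper: insert the product series for $\mathrm{J}_{\mu}\mathrm{J}_{\nu}$, separate even and odd powers of $z$ to obtain (\ref{b2s}), and then establish (\ref{as}) by proving the orthogonality relation $\sum_{k=j}^{s}\alpha_{s,k}^{\mu,\nu}\beta_{k,j}^{\mu,\nu}=\delta_{s,j}$ via a closed form for the truncated sum $\sum_{k=j}^{p}$. The only cosmetic difference is that the paper's closed form for that truncated sum carries an explicit factor $(p-s)$ rather than the $1/\Gamma(s-p)$ you anticipated from the first theorem; either way the sum vanishes at $p=s$ for $j\neq s$, and the paper, exactly as you predicted, simply asserts the closed form without further justification.
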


\begin{proof}
Let $z\in\Omega.$ Using the formula \cite[5.41]{MR1349110}%
\[
\mathrm{J}_{\mu}\left(  z\right)  \mathrm{J}_{\nu}\left(  z\right)
=\sum_{k=0}^{\infty}\frac{\left(  -1\right)  ^{k}}{\Gamma\left(
\mu+k+1\right)  \Gamma\left(  \nu+k+1\right)  }\binom{\mu+\nu+2k}{k}\left(
\frac{z}{2}\right)  ^{\mu+\nu+2k}%
\]
in (\ref{kap2}), we get%
\begin{gather*}
z^{\mu+\nu}f\left(  z\right)  =%
{\displaystyle\sum\limits_{n=0}^{\infty}}
\left(  a_{n}^{\mu,\nu}+zc_{n}^{\mu,\nu}\right)  \sum_{k=0}^{\infty}%
\frac{\left(  -1\right)  ^{k}}{\Gamma\left(  \mu+n+k+1\right)  \Gamma\left(
\nu+n+k+1\right)  }\\
\times\binom{\mu+\nu+2n+2k}{k}\left[  \frac{\left(  \mu+\nu+2n\right)  z}%
{2}\right]  ^{\mu+\nu+2n+2k},
\end{gather*}
or%
\begin{align*}
f\left(  z\right)   &  =%
{\displaystyle\sum\limits_{n=0}^{\infty}}
\sum_{k=0}^{\infty}\left(  a_{n}^{\mu,\nu}+zc_{n}^{\mu,\nu}\right)
\frac{\left(  -1\right)  ^{k}}{\Gamma\left(  \mu+n+k+1\right)  \Gamma\left(
\nu+n+k+1\right)  }\\
&  \times\binom{\mu+\nu+2n+2k}{k}\left(  \frac{\mu+\nu+2n}{2}\right)
^{\mu+\nu+2n+2k}z^{2\left(  n+k\right)  }.
\end{align*}
Setting $n=s-k,$ we have%
\begin{align*}
f\left(  z\right)   &  =%
{\displaystyle\sum\limits_{s=0}^{\infty}}
\sum_{k=0}^{s}\left(  a_{s-k}^{\mu,\nu}+zc_{s-k}^{\mu,\nu}\right)
\frac{\left(  -1\right)  ^{k}}{\Gamma\left(  \mu+s+1\right)  \Gamma\left(
\nu+s+1\right)  }\\
&  \times\binom{\mu+\nu+2s}{k}\left(  \frac{\mu+\nu+2s-2k}{2}\right)
^{\mu+\nu+2s}z^{2s},
\end{align*}
or%
\begin{align*}
f\left(  z\right)   &  =%
{\displaystyle\sum\limits_{s=0}^{\infty}}
\frac{z^{2s}}{\Gamma\left(  \mu+s+1\right)  \Gamma\left(  \nu+s+1\right)
}\sum_{k=0}^{s}\left(  -1\right)  ^{k}\binom{\mu+\nu+2s}{k}\left(  \frac
{\mu+\nu+2s-2k}{2}\right)  ^{\mu+\nu+2s}a_{s-k}^{\mu,\nu}\\
&  +%
{\displaystyle\sum\limits_{s=0}^{\infty}}
\frac{z^{2s+1}}{\Gamma\left(  \mu+s+1\right)  \Gamma\left(  \nu+s+1\right)
}\sum_{k=0}^{s}\left(  -1\right)  ^{k}\binom{\mu+\nu+2s}{k}\left(  \frac
{\mu+\nu+2s-2k}{2}\right)  ^{\mu+\nu+2s}c_{s-k}^{\mu,\nu}.
\end{align*}
Comparing with (\ref{fb}), we conclude that%
\begin{align*}
b_{2s}  &  =\frac{1}{\Gamma\left(  \mu+s+1\right)  \Gamma\left(
\nu+s+1\right)  }\sum_{k=0}^{s}\left(  -1\right)  ^{k}\binom{\mu+\nu+2s}%
{k}\left(  \frac{\mu+\nu+2s-2k}{2}\right)  ^{\mu+\nu+2s}a_{s-k}^{\mu,\nu},\\
b_{2s+1}  &  =\frac{1}{\Gamma\left(  \mu+s+1\right)  \Gamma\left(
\nu+s+1\right)  }\sum_{k=0}^{s}\left(  -1\right)  ^{k}\binom{\mu+\nu+2s}%
{k}\left(  \frac{\mu+\nu+2s-2k}{2}\right)  ^{\mu+\nu+2s}c_{s-k}^{\mu,\nu},
\end{align*}
from which (\ref{b2s}) follows.

Since%
\begin{gather*}%
{\displaystyle\sum\limits_{k=j}^{p}}
\alpha_{s,k}^{\mu,\nu}\beta_{k,j}^{\mu,\nu}=\left(  -1\right)  ^{j+p}%
\binom{\mu+\nu+s+p}{s-p}\binom{\mu+\nu+2p}{p-j}\\
\times\left(  \frac{\mu+\nu+2j}{\mu+\nu+2s}\right)  ^{\mu+\nu+2p+2}%
\frac{\left(  p-s\right)  \left(  \mu+\nu+2s\right)  }{\left(  j-s\right)
\left(  \mu+\nu+s+j\right)  },
\end{gather*}
we see that%
\[%
{\displaystyle\sum\limits_{k=j}^{s}}
\alpha_{s,k}^{\mu,\nu}\beta_{k,j}^{\mu,\nu}=0,\quad j\neq s,
\]
while for $j=s,$ we have%
\begin{gather*}
\alpha_{s,s}^{\mu,\nu}\beta_{s,s}^{\mu,\nu}=\Gamma\left(  \mu+s+1\right)
\Gamma\left(  \nu+s+1\right)  \left(  \frac{2}{\mu+\nu+2s}\right)  ^{\mu
+\nu+2s}\\
\times\frac{1}{\Gamma\left(  \mu+s+1\right)  \Gamma\left(  \nu+s+1\right)
}\left(  \frac{\mu+\nu+2s}{2}\right)  ^{\mu+\nu+2s}=1.
\end{gather*}
Therefore,%
\begin{gather*}%
{\displaystyle\sum\limits_{k=0}^{s}}
\alpha_{s,k}^{\mu,\nu}b_{2k}=%
{\displaystyle\sum\limits_{k=0}^{s}}
\alpha_{s,k}^{\mu,\nu}%
{\displaystyle\sum\limits_{j=0}^{k}}
\beta_{k,j}^{\mu,\nu}a_{j}^{\mu,\nu}\\
=%
{\displaystyle\sum\limits_{j=0}^{s}}
a_{j}^{\mu,\nu}%
{\displaystyle\sum\limits_{k=j}^{s}}
\alpha_{s,k}^{\mu,\nu}\beta_{k,j}^{\mu,\nu}=%
{\displaystyle\sum\limits_{j=0}^{s}}
a_{j}^{\mu,\nu}\delta_{j,s}=a_{s}^{\mu,\nu}.
\end{gather*}
The same calculation holds for $b_{2k+1}$ and $c_{k}^{\mu,\nu},$ proving
(\ref{as}).
\end{proof}

\begin{remark}
Nielsen \cite{MR1508992}, defined Kapteyn series of the second type as%
\[
\left(  \frac{z}{2}\right)  ^{\frac{\mu+\nu}{2}}F\left(  z\right)  =%
{\displaystyle\sum\limits_{n=0}^{\infty}}
a_{n}^{\mu,\nu}\mathrm{J}_{\frac{\mu+n}{2}}\left[  \left(  \frac{\mu+\nu}%
{2}+n\right)  z\right]  \mathrm{J}_{\frac{\nu+n}{2}}\left[  \left(  \frac
{\mu+\nu}{2}+n\right)  z\right]
\]
and assuming that%
\[
F\left(  z\right)  =%
{\displaystyle\sum\limits_{m=0}^{\infty}}
B_{m}\left(  \frac{z}{2}\right)  ^{m},
\]
he obtained%
\begin{align*}
a_{s}^{\mu,\nu}  &  =%
{\displaystyle\sum\limits_{k=0}^{\left\lfloor \frac{s}{2}\right\rfloor }}
\left(  \frac{\mu+\nu}{2}+s-2k\right)  \Gamma\left(  \frac{\mu+s}%
{2}-k+1\right)  \Gamma\left(  \frac{\nu+s}{2}-k+1\right) \\
&  \times\binom{\frac{\mu+\nu}{2}+s-k-1}{k}\left(  \frac{2}{\mu+\nu
+2s}\right)  ^{\frac{\mu+\nu}{2}+s-k+1}B_{s-2k}.
\end{align*}

\end{remark}

We now have all the necessary tools to compute (\ref{S1}).

\begin{example}
Let
\[
a_{n}^{\mu,\nu}(p)=n^{2p},\quad c_{n}^{\mu,\nu}=0,\quad\mu=\nu=0.
\]
Then, $b_{2s+1}=0$ and we have%
\[
b_{2s}(p)=\frac{\left(  -1\right)  ^{s}}{\left(  s!\right)  ^{2}}%
{\displaystyle\sum\limits_{k=0}^{s}}
\left(  -1\right)  ^{k}\binom{2s}{s-k}k^{2\left(  s+p\right)  },
\]
or
\[
b_{2s}(p)=\frac{1}{\left(  s!\right)  ^{2}}%
{\displaystyle\sum\limits_{k=0}^{s}}
\left(  -1\right)  ^{k}\binom{2s}{k}\left(  s-k\right)  ^{2\left(  s+p\right)
}.
\]
Since the terms in the sum are symmetric with respect to $k=s,$ we can write%
\[
b_{2s}(p)=\frac{1}{2}\frac{1}{\left(  s!\right)  ^{2}}%
{\displaystyle\sum\limits_{k=0}^{2s}}
\left(  -1\right)  ^{k}\binom{2s}{k}\left(  s-k\right)  ^{2\left(  s+p\right)
},
\]
unless $s=0=p,$ in which case we have%
\[
b_{0}(0)=1.
\]
Thus, we have%
\begin{align*}
b_{2s}(p)  &  =\varepsilon_{s,p}\frac{1}{\left(  s!\right)  ^{2}}%
{\displaystyle\sum\limits_{k=0}^{2s}}
\left(  -1\right)  ^{k}\binom{2s}{k}\left(  s-k\right)  ^{2\left(  s+p\right)
}\\
&  =\varepsilon_{s,p}\frac{1}{\left(  s!\right)  ^{2}4^{s+p}}%
{\displaystyle\sum\limits_{k=0}^{2s}}
\left(  -1\right)  ^{k}\binom{2s}{k}\left(  2s-2k\right)  ^{2\left(
s+p\right)  },
\end{align*}
where $\varepsilon_{s,p}$ was defined in (\ref{epsilon}). Using
(\ref{binomial}), we get%
\[
b_{2s}(p)=\frac{\varepsilon_{s,p}}{4^{p}\left(  s!\right)  ^{2}}\left.
\frac{d^{2\left(  s+p\right)  }}{dt^{2\left(  s+p\right)  }}\ \sinh
^{2s}\left(  t\right)  \right\vert _{t=0},
\]
or
\begin{equation}
b_{2s}(p)=\frac{\varepsilon_{s,p}}{4^{p}\left(  s!\right)  ^{2}}\left(
2s+2p\right)  !\left[  t^{2p}\right]  \left[  \frac{\sinh\left(  t\right)
}{t}\right]  ^{2s}, \label{bs1}%
\end{equation}
and therefore%
\begin{align*}
b_{2s}(0)  &  =\varepsilon_{s,0}\frac{\left(  2s\right)  !}{\left(  s!\right)
^{2}},\quad b_{2s}(1)=\frac{1}{2}\frac{\left(  2s+2\right)  !}{4\left(
s!\right)  ^{2}}\frac{s}{3},\\
b_{2s}(2)  &  =\frac{1}{2}\frac{\left(  2s+4\right)  !}{4^{2}\left(
s!\right)  ^{2}}\frac{s\left(  5s-1\right)  }{90},\\
b_{2s}(3)  &  =\frac{1}{2}\frac{\left(  2s+6\right)  !}{4^{3}\left(
s!\right)  ^{2}}\frac{s\left(  35s^{2}-21s+4\right)  }{5670},\\
b_{2s}(4)  &  =\frac{1}{2}\frac{\left(  2s+8\right)  !}{4^{4}\left(
s!\right)  ^{2}}\frac{s\left(  5s-2\right)  \left(  35s^{2}-28s+9\right)
}{340200},\ldots.
\end{align*}

Let $g_{p}\left(  z\right)  $ be defined by
\[%
{\displaystyle\sum\limits_{n=0}^{\infty}}
n^{2p}\mathrm{J}_{n}^{2}\left(  2nz\right)  =g_{p}\left(  z\right)  =%
{\displaystyle\sum\limits_{s=0}^{\infty}}
b_{2s}(p)z^{2s}.
\]
Then, since
\[
\left(  p+1\right)  _{s}\left(  p+\frac{1}{2}\right)  _{s}=\frac{\left(
2s+2p\right)  !}{\left(  2p\right)  !4^{s}},
\]
we get from (\ref{bs1}) that%
\[
g_{p}\left(  z\right)  =\frac{\left(  2p\right)  !}{4^{p}}\left[
t^{2p}\right]
{\displaystyle\sum\limits_{s=0}^{\infty}}
\varepsilon_{s,p}\frac{\left(  p+1\right)  _{s}\left(  p+\frac{1}{2}\right)
_{s}}{\left(  s!\right)  ^{2}}\ \left[  2\frac{\sinh\left(  t\right)  }%
{t}z\right]  ^{2s}%
\]
\
\begin{equation}
=\frac{\left(  2p\right)  !}{4^{p}}\left[  t^{2p}\right]  \left\{  \frac{1}%
{2}+\frac{1}{2}\ _{2}F_{1}\left[  \left.
\begin{array}
[c]{c}%
p+1,p+\frac{1}{2}\\
1
\end{array}
\right\vert 4\frac{\sinh^{2}\left(  t\right)  }{t^{2}}z^{2}\right]  \right\}
. \label{gp}%
\end{equation}
Using (\ref{gp}), we obtain%
\begin{align*}
g_{0}\left(  z\right)   &  =\frac{1}{2}+\frac{1}{2\sqrt{1-4z^{2}}},\quad
g_{1}\left(  z\right)  =\frac{z^{2}\left(  1+z^{2}\right)  }{\left(
1-4z^{2}\right)  ^{\frac{7}{2}}},\\
g_{2}\left(  z\right)   &  =\frac{z^{2}\left(  1+37z^{2}+118z^{4}%
+27z^{6}\right)  }{\left(  1-4z^{2}\right)  ^{\frac{13}{2}}},\\
g_{3}\left(  z\right)   &  =\frac{z^{2}\left(  1+217z^{2}+5036z^{4}%
+23630z^{6}+22910z^{8}+2250z^{10}\right)  }{\left(  1-4z^{2}\right)
^{\frac{19}{2}}},\ldots.
\end{align*}

\end{example}

\begin{remark}
Formulas for $g_{0}\left(  z\right)  $ and $g_{1}\left(  z\right)  $ were
computed by George Augustus Schott in \cite{Schott}. They were reproduced by
Watson in \cite[17.6 (2)-(3)]{MR1349110}, but there is a typographical mistake
in the equation for $g_{1}\left(  z\right)  ,$ since the denominator should
contain a $\frac{7}{2}$ power, instead of the $\frac{1}{2}$ printed.
\end{remark}

\begin{acknowledgement}
The work of D. Dominici was supported by a Humboldt Research Fellowship for
Experienced Researchers from the Alexander von Humboldt Foundation.
\end{acknowledgement}

\bibliographystyle{elsarticle-num}

\end{document}